\newtheorem{thm}{Theorem}[section]
\newtheorem{lem}[thm]{Lemma}
\newtheorem{prop}[thm]{Proposition}
\newtheorem{fermatthm}{Theorem}[section]
\theoremstyle{definition}
\newtheorem{rem}[thm]{Remark}
\newtheorem{defn}[thm]{Definition}
\theoremstyle{remark}
\numberwithin{equation}{section}
\def\R{{\mathbb R}}
\def\Z{{\mathbb Z}}
\def\C{{\mathbb C}}
\def\G{\Gamma}
\begin{document}
\title[A nontrivial algebraic cycle in
the Jacobian variety]
{A nontrivial algebraic cycle in
the Jacobian variety of the Fermat sextic}

\author[Yuuki Tadokoro]{Yuuki Tadokoro}
\address{Natural Science Education,
Kisarazu National College of Technology,
2-11-1 Kiyomidai-Higashi,
Kisarazu, Chiba 292-0041, Japan}
\email{tado@nebula.n.kisarazu.ac.jp}

\begin{abstract}
We compute some value of the harmonic volume for the Fermat sextic.
Using this computation, we prove that some special algebraic cycle
in the Jacobian variety of the Fermat sextic is not algebraically
equivalent to zero.
\end{abstract}

\maketitle
\section{Introduction}
B. Harris \cite{H-1} defined the harmonic volume for
the compact Riemann surface $X$
of genus $g \geq 3$,
using Chen's iterated integrals \cite{C}.
Let $J(X)$ be the Jacobian variety of $X$.
By the Abel-Jacobi map $X\to J(X)$, $X$ is
embedded in $J(X)$.
By a consideration of the special harmonic volume,
Harris \cite{H-3} proved that the algebraic cycle $F(4)-F(4)^{-}$
is not algebraically
equivalent to zero in $J(F(4))$.
Here, $F(4)$ is the Fermat quartic, which is a compact Riemann
surface of genus $3$.
Ceresa \cite{Ce} showed that
the algebraic cycle $X-X^{-}$ is not algebraically equivalent to zero
in $J(X)$
for a generic $X$.
We know few explicit nontrivial examples
except for $F(4)$.
Harris \cite{H-4} used the special feature of $F(4)$ that
its normalized period matrix has entries in a discrete subring of $\C$.
The Fermat sextic $F(6)$ has the same feature.
We use this and prove
\begin{fermatthm}\label{Fermat}
Let $F(6)$ be the Fermat sextic.
Then, the algebraic cycle $F(6)-F(6)^-$ is not algebraically
equivalent to zero in $J(F(6))$.
\end{fermatthm}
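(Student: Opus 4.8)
The plan is to reduce the assertion to the non-vanishing, modulo $\tfrac12\Z$, of one explicitly computable value of the harmonic volume of $F(6)$, and then to evaluate that value by the arithmetic of Fermat periods, in parallel with B. Harris's treatment of $F(4)$. The first step is a cycle-theoretic criterion, the analogue for $F(6)$ of Harris's criterion for $F(4)$: reversing the orientation of a Riemann surface $X$ negates the iterated integrals defining $I_X$, so the class of $X-X^-$ in the Griffiths group is controlled by $2I_X$, and if $X-X^-$ were algebraically equivalent to zero its Abel--Jacobi image would lie in the algebraic part of the intermediate Jacobian. Because the normalized period matrix of $F(6)$ has entries in a fixed discrete subring (the ring of integers of a cyclotomic field, exactly as for $F(4)$), this forces the pertinent value of $I_{F(6)}$ to lie in $\tfrac12\Z/\Z$; hence it suffices to exhibit a single class $\gamma$ in the domain of $I_{F(6)}$ with $I_{F(6)}(\gamma)\notin\tfrac12\Z/\Z$. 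I would record this reduction as the statement immediately preceding the theorem above.

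Next I would set up the explicit model and use the symmetry to make the search for $\gamma$ and the subsequent computation tractable. Working with $x^6+y^6=1$, the curve $F(6)$ has genus $10$, the group $G=\mu_6\times\mu_6$ acts by $(x,y)\mapsto(\zeta x,\zeta'y)$ (with $S_3$ permuting the homogeneous coordinates), and the holomorphic $1$-forms $\omega_{a,b}=x^{a-1}y^{b-6}\,dx$ with $a,b\ge1$ and $a+b\le5$ form a basis of simultaneous $G$-eigenvectors. The harmonic volume is equivariant for orientation-preserving automorphisms, and $H_1(F(6);\Z)$ is generated over $\Z[G]$ by the class $\kappa$ of a standard arc; I would use equivariance together with the $G$-characters of the forms to determine which iterated integrals $\int_{g\kappa}\omega_{a,b}\,\omega_{c,d}$ (for $g\in G$) are forced to vanish and which survive, and then take $\gamma$ to be a suitable $G$-linear combination of translates of $\kappa^{\otimes3}$ lying in the precise subgroup on which $I_{F(6)}$ obstructs algebraic equivalence, not merely the subgroup on which it is defined.

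The surviving iterated integrals reduce, via $x=t$ and $u=t^6$, to integrals over $[0,1]$ of products of branches of powers of $1-u$, that is, to the Beta function $B(p,q)=\int_0^1t^{p-1}(1-t)^{q-1}\,dt$ at arguments in $\tfrac16\Z$ together with their iterated analogues. Applying the reflection and multiplication formulas for $\Gamma$, I would simplify the result to an element of $\tfrac1N\Z[\zeta_{12}]$ for an explicit $N$; since $I_{F(6)}(\gamma)$ is real and lies in the image of a discrete subgroup of $\C$, this element must reduce modulo $\Z$ to a specific rational number, and it remains to check that this rational is not in $\tfrac12\Z$, which by the criterion of the first step proves the theorem.

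The main obstacle is the combination of the second and third steps: keeping a consistent, explicit account of the branch cuts, of the paths representing the translates $g\kappa$, and of the intersection numbers, so that on the one hand $\gamma$ genuinely lies in the obstruction subgroup and is not annihilated by equivariance for trivial reasons, and on the other hand the iterated (double-Beta) integral is evaluated correctly with its denominator $N$ controlled tightly enough to pin down the value in $\Q/\Z$. Everything else is either standard harmonic-volume theory recalled earlier in the paper or a finite, if delicate, calculation.
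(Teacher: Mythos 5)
Your overall strategy matches the paper's: invoke Harris's criterion (Theorem \ref{a sufficient condtion such that cycles are nontrivial}, that algebraic equivalence of $F(6)-F(6)^-$ would force $2I_R(\omega)\equiv 0$ modulo the discrete period ring $R=\Z[\zeta_6]$), then exhibit one tensor of eigenforms whose harmonic volume violates it, using the $\mu_6\times\mu_6$-action, the arc-built generators of $H_1$, and Kamata's intersection numbers to get the Poincar\'e dual. One correction of normalization: the obstruction lives in $\C/R$ with $R=\Z[\zeta_6]$, not in $\tfrac12\Z/\Z$; the paper passes to $2\Re(\alpha)$ precisely because $z+\bar z\in\Z$ for $z\in\Z[\zeta_6]$, so it suffices to show the real part of $2\cdot 2I_R$ is not an integer.

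The genuine gap is in your third step. The single periods $\int_{\gamma_0}\omega_{r,s}$ are indeed Beta values, but the length-two iterated integrals $\int_{\gamma_0}\omega_{r,s}\omega_{l,m}$ are double integrals over a simplex and do \emph{not} reduce to products or ratios of Gamma values via the reflection and multiplication formulas. Proposition \ref{hypergeometric function} identifies them as values of ${}_3F_2$ at $t\to 1$ with parameters in $\tfrac16\Z$, and for the relevant quantity $x_{1,2,1,3}$ no closed form in a cyclotomic field is available; nor should one expect the answer to be rational modulo $\Z$ --- the paper finds $2\Re(\alpha)\equiv 0.74286\pm 10^{-5}$. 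Your assertion that the value ``must reduce modulo $\Z$ to a specific rational number'' because it lies in the image of a discrete subgroup is a non sequitur: the entire point is to show this value is \emph{not} in the discrete subgroup, and assuming rationality begs the question. Without either a closed-form evaluation (not known) or a rigorous numerical computation with an error bound smaller than the distance to the nearest integer, the final non-integrality check cannot be completed; the paper supplies exactly this via the ${}_3F_2$ representation together with a certified numerical evaluation.
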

We compute
iterated integrals with some common base point of $F(6)$.
This is a similar computation
of Tretkoff and Tretkoff \cite{T-T}.
In order to compute the Poincar\'e dual of $F(6)$,
we use the result of Kamata \cite{K} for
the intersection number of the first integral homology class
of the Fermat curves.
It is difficult to
apply Harris' method to other Fermat curves.
We \cite{T-2} proved the same fact as the Klein quartic,
but we did not use the above special feature.

Now we describe the contents of this paper
briefly.
In \S \ref{Harmonic volumes and algebraic cycles},
we recall the definition and fundamental properties
of the harmonic volume and algebraic cycle in $J(X)$.
\S \ref{Iterated integrals of the Fermat curves}
is devoted to the computation of
iterated integrals of the Fermat curves.
In the latter half of this section,
we prove that iterated integrals on those curves
are represented by some special values of
the generalized hypergeometric function ${}_3F_2$.
It was introduced in \cite{T-2} but not proved.
In \S \ref{A nontrivial algebraic cycle in $J(F(6))$},
we prove Main Theorem,
using the numerical calculation by the MATHEMATICA program.

\noindent
{\bf Acknowledgments.}
The author is grateful to Nariya Kawazumi
for valuable advice and support.
This work is partially supported by 21st
Century COE program  (University of Tokyo) by
the Ministry of Education, Culture, Sports, Science and Technology.

\section{Harmonic volumes and algebraic cycles}
\label{Harmonic volumes and algebraic cycles}
Let $R$ be
a discrete subring of $\C$.
We suppose that all the entries
of the period matrix of the compact Riemann surface
$X$ can be reduced to elements of $R$.
Harris \cite{H-4} pointed out that
we may replace $\Z[\sqrt{-1}]$ for the Fermat quartic
in Harris' method in \cite{H-3}
with $R$.
We recall the harmonic volume for such $X$
as follows.
Let $H^{1,0}_R$ denote
the space of homolophic 1-forms on $X$ with $R$-periods.
It is a $g$-dimensional $\C$-vector space.
We choose
a basis $\{K_1,K_2,\ldots,K_{2g}\}$ of
the first integral homology group $H_1(X;\Z)$ of $X$.
\begin{defn}
[\cite{H-4}]
The harmonic volume is
defined to be the homomorphism $(H^{1,0}_R)^{\otimes_R 3}\to \C/{R}$ by
$$
I_R(\omega_1\otimes \omega_2\otimes \omega_3)
=\sum_{r=1}^{2g}a_r\int_{C_r}\omega_1\omega_2
 \quad \mathrm{mod} \ R.$$
Here
$\omega_1\otimes \omega_2\otimes \omega_3$
is an element of $(H^{1,0}_R)^{\otimes_R 3}$,
$C_r$ is a loop in $X$ at the fixed base point $x_0$
whose homology class is $K_r$,
and the Poincar\'e dual of $\omega_k$
is equal to $\displaystyle\sum_{r=1}^{2g}a_rK_r$
($a_r\in \C$).
The integral $\displaystyle \int_{C_r}\omega_1\omega_2$
is Chen's iterated integral \cite{C}, that is,
$\displaystyle \int_{C_r}\omega_1\omega_2
=\int_{0\leq t_1\leq t_2\leq 1}f_i(t_1)f_j(t_2)dt_1dt_2$
for $C_r^{\ast}\omega_i=f_i(t)dt,$ $i=1,2$,
where $t$ is the coordinate in the unit interval $[0,1]$.
\end{defn}

We remark that $I_R$
dose not depend on the choice of the base point $x_0$.
It is a modified version
of the original harmonic volume $I$.
See Harris \cite{H-1} for $I$.

Let $J=J(X)$ be the Jacobian variety of $X$.
By the Abel-Jacobi map $X\to J(X)$,
$X$ is embedded in $J(X)$.
The algebraic $1$-cycle $X-X^{-}$ in $J(X)$
is homologous to zero.
Here we denote by $X^{-}$ the image of $X$
under the multiplication map by $-1$.
We recall the relation between the harmonic volume and
algebraic 1-cycle $X-X^-$ in $J$.
We say the
algebraic cycle $X-X^{-}$ is
{\it algebraically equivalent to zero in} $J$
if there exists a topological $3$-chain
$W$ such that $\partial W=X-X^-$
and $W$ lies on $S$,
where $S$
is an algebraic (or complex analytic)
subset of $J$
of complex dimension $2$ (Harris \cite{H-4}).
The chain $W$ is unique up to
$3$-cycles.
Harris proved the key theorem.
\begin{thm}[Section 2.7 in \cite{H-4}]
\label{a sufficient condtion such that cycles are nontrivial}
If the algebraic cycle $X-X^{-}$ is
algebraically equivalent to zero in $J$,
then $2I_R(\omega)\equiv 0$ modulo $R$
for each $\omega\in (H^{1,0}_R)^{\otimes_R 3}$.
\end{thm}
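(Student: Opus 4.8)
The plan is to identify $2I_R(\omega)$, for $\omega=\omega_1\otimes\omega_2\otimes\omega_3\in(H^{1,0}_R)^{\otimes_R 3}$, with the period of a holomorphic $(3,0)$-form over a topological $3$-chain bounding $X-X^-$, and then to exploit that such a period is forced to vanish once the bounding chain can be deformed onto a complex surface. First I would fix the bounding chain: since $X-X^-$ is homologous to zero in $J$, choose a topological $3$-chain $W$ with $\partial W=X-X^-$; it is determined up to a $3$-cycle, and $H_3(J;\Z)\cong\wedge^3H_1(J;\Z)\cong\wedge^3H_1(X;\Z)$. Via the Abel--Jacobi embedding $u\colon X\to J$ identify $H^{1,0}(X)$ with the space of translation-invariant holomorphic $1$-forms on $J$, and let $\eta_\omega$ be the translation-invariant complex $3$-form on $J$ with de Rham class $\omega_1\wedge\omega_2\wedge\omega_3\in\wedge^3H^{1,0}(J)$; it is of Hodge type $(3,0)$ and has (real) degree $3$, so it can be integrated over $W$. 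The heart of the argument is the identity
\[
2I_R(\omega)\equiv\int_W\eta_\omega\pmod{R}.
\]

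Granting this identity, the theorem is immediate. Suppose $X-X^-$ is algebraically equivalent to zero in $J$. Then $W$ may be chosen to lie on a complex analytic subset $S\subset J$ of complex dimension $2$. Pulling $\eta_\omega$ back along a resolution $\widetilde S\to S$ produces a holomorphic $3$-form on a complex surface, hence the zero form; consequently $\eta_\omega$ restricts to $0$ on the smooth locus of $S$, while the singular locus of $S$ is too small (real dimension $<3$) to contribute to $\int_W\eta_\omega$. Therefore $\int_W\eta_\omega=0$, and so $2I_R(\omega)\equiv 0\pmod{R}$, which is the assertion of the theorem.

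It remains to prove the displayed identity, and this is the real work. On one side, I would unwind the definition of $I_R$: because $\omega_1$ and $\omega_2$ are holomorphic we have $[\omega_1]\cup[\omega_2]=\int_X\omega_1\wedge\omega_2=0$, so Chen's iterated integral $\gamma\mapsto\int_\gamma\omega_1\omega_2$, read modulo the $R$-span of $\int\omega_1$, $\int\omega_2$ and their products, descends to a homomorphism $H_1(X;\Z)\to\C/R$; evaluating this homomorphism on the $1$-cycle $\sum_r a_rK_r$ Poincar\'e dual to $\omega_3$ recovers $I_R(\omega)\in\C/R$, independently of the base point. On the other side, I would compute $\int_W\eta_\omega$ by the classical bounding-chain technique: realize $W$ through a cone or homotopy construction over $u(X)$ (e.g.\ the image of $X\times[0,1]$ under $(x,t)\mapsto(1-2t)u(x)$), apply Stokes' theorem using a primitive of $\omega_1$ along $u(X)$, and reduce $\int_W\eta_\omega$ to a weighted sum $\sum_r a_r\left(\int_{C_r}\omega_1\omega_2\right)$, the weights being the coefficients of the Poincar\'e dual of $\omega_3$. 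The factor $2$ appears because $W$ has the two boundary pieces $X$ and $X^-=(-1)(X)$ and, $\eta_\omega$ being of odd degree, $(-1)^*\eta_\omega=-\eta_\omega$, so the two pieces contribute equally; hence $\int_W\eta_\omega=2\sum_r a_r\int_{C_r}\omega_1\omega_2$. Finally one checks that the two reductions modulo $R$ are compatible: changing $W$ by a $3$-cycle changes $\int_W\eta_\omega$ by a triple product of periods of the $\omega_k$, which lies in $R$ by the standing $R$-period hypothesis, and altering the representative loops $C_r$ or the base point changes $\int_{C_r}\omega_1\omega_2$ only by periods of $\omega_1$, $\omega_2$, hence by an element of $R$ after weighting.

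The main obstacle is exactly this last step: passing rigorously from Chen's iterated integrals to the geometric period $\int_W\eta_\omega$, correctly pinning down the Hodge type $(3,0)$ of the $3$-form, tracking the factor $2$, and matching the two mod-$R$ ambiguities. Once that dictionary is in place, the geometric vanishing is a one-line consequence of the fact that a complex surface carries no nonzero holomorphic $3$-form.
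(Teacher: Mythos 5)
The paper offers no proof of this statement: it is quoted verbatim from Harris \cite{H-4}, Section 2.7, and is used as a black box. Your outline reconstructs exactly Harris's argument --- identify $2I_R(\omega_1\otimes\omega_2\otimes\omega_3)$ modulo $R$ with $\int_W\omega_1\wedge\omega_2\wedge\omega_3$ for a $3$-chain $W$ with $\partial W=X-X^-$, note that the ambiguities (choice of $W$ up to a $3$-cycle, choice of the loops $C_r$) change the value only by $3\times 3$ determinants and products of $R$-periods, hence by elements of the ring $R$, and then observe that a $(3,0)$-form restricts to zero on a $2$-dimensional complex analytic set --- so the approach is the right one and the structure is sound. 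The only point worth tightening is that the homotopy invariance of $\int_\gamma\omega_1\omega_2$ rests on the \emph{pointwise} identity $\omega_1\wedge\omega_2=0$ (a $(2,0)$-form on a curve), not merely on the vanishing of the cup product $[\omega_1]\cup[\omega_2]$; and, as you acknowledge, the central identity $2I_R(\omega)\equiv\int_W\eta_\omega$ is asserted rather than derived, which is precisely the content of Harris's Section 2.7.
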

See Harris \cite{H-3, H-4} for details.
In \S \ref{A nontrivial algebraic cycle in $J(F(6))$},
we find some element $\omega\in (H^{1,0}_R)^{\otimes_R 3}$
such that $2I_{R}(\omega)\not\equiv 0$ modulo $R$ for
the Fermat sextic.

\section{Iterated integrals of the Fermat curves}
\label{Iterated integrals of the Fermat curves}
In this section we compute iterated integrals of the
Fermat sextic.
Let $H^{1,0}$ denote 
the space of holomorphic 1-forms on $X$
We choose a basis $\{\omega_1,\omega_2,\ldots,\omega_g\}$
of $H^{1,0}$.
Let $\gamma$ be a loop in $X$ at some base point.
We remark that the iterated integral
$\displaystyle \int_{\gamma}\omega_i\omega_j$
depends on the choice of the base points
and is invariant under homotopy relative a fixed base point.
This iterated integral and the quadratic period defined by Gunning \cite{G}
are essentially same except for the sign.

For $N\in \Z_{\geq 3}$, let $F(N)=\{(X:Y:Z)\in\C P^2; X^N+Y^N=Z^N\}$
denote the Fermat curve of degree $N$,
which is a compact Riemann surface of genus $(N-1)(N-2)/{2}$.
Let $x$ and $y$ denote $X/{Z}$ and 
$Y/{Z}$ respectively. 
The equation $X^N+Y^N=Z^N$
induces $x^N+y^N=1$.
Using this coordinate $(x, y)\in F(N)$,
 the holomorphic map $\pi: F(N)\to\C P^1$ is defined by
 $\pi(x,y)=x$.
It is clear that $\pi$
is an $N$-sheeted covering $F(N) \to \C P^1$, 
branched over $N$ branch points 
$\{\zeta_N^i\}_{i=0,1,\ldots,N-1}\subset \C P^1$.
Here $\zeta_N$ denotes $\mathrm{exp}(2\pi\sqrt{-1}/{N})$.
Holomorphic automorphisms $\alpha$ and $\beta$ of $F(N)$
are defined by
$\alpha(X:Y:Z)=(\zeta_N X:Y:Z)$ and $\beta(X:Y:Z)=(X:\zeta_N Y:Z)$
respectively.
We have that $\alpha\beta=\beta\alpha$ and
 the subgroup of the holomorphic automorphisms of $F(N)$
which is generated
by $\alpha$ and $\beta$ is isomorphic to
$(\Z/{N\Z})\times (\Z/{N\Z})$.
Let $P_i$ and $Q_i$ denote $\alpha^i(1, 0)$
 and $\beta^i(0,1)$, $i=0,1,\ldots,N-1$ respectively.
We define a simply connected domain $\Omega$ by
$\C\setminus\bigcup_{j=0}^{j=N-1}
\{t\zeta^j;|t|\geq 1, t\in \R\}$.
Then $\pi^{-1}(\Omega)$ consists of
$N$ path-connected components and we denote
by $\Omega_i$ a connected
 component of $\pi^{-1}(\Omega)$ which contains $Q_i$,
$i=0,1,\ldots,N-1$.
Let $\gamma_0$ be a path
$[0,1]\ni t\mapsto 
(t,\sqrt[N]{1-t^N})\in F(N)$,
where $\sqrt[N]{1-t^N}$ is a real nonnegative analytic function
on $[0,1]$.
A loop in $F(N)$ is defined by
$$\kappa_0=\gamma_0\cdot(\beta\gamma_0)^{-1}\cdot
(\alpha \beta\gamma_0)\cdot(\alpha\gamma_0)^{-1},$$
where the product $\ell_1\cdot \ell_2$
indicates that we traverse $\ell_1$ first, then $\ell_2$.
We consider a loop $\alpha^i\beta^j\kappa_0$
as an element of the first homology group $H_1(F(N);\Z)$ of $F(N)$.
Kamata obtained the following lemma
for the intersection number of $H_1(F(N);\Z)$.
\begin{lem}[Section 5 in \cite{K}]
\label{intersection}
We have
$$
\left\{
\begin{array}{rcl}
(\kappa_0, \alpha\kappa_0)=& 1 &=-(\alpha\kappa_0, \kappa_0) \\
(\kappa_0, \beta\kappa_0)=& 1 &=-(\beta\kappa_0,\kappa_0) \\
(\kappa_0, \alpha\beta\kappa_0)=& -1 &
=-(\alpha\beta\kappa_0,\kappa_0) \\
(\kappa_0, \alpha\beta^{-1}\kappa_0)=& 0 &
=(\alpha\beta^{-1}\kappa_0,\kappa_0).
\end{array}
\right.
$$
\end{lem}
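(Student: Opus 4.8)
The plan is to extract every intersection number directly from the geometry of the branched covering $\pi\colon F(N)\to\C P^1$, $\pi(x,y)=x$, whose deck transformation group is $\langle\beta\rangle$ (indeed $\pi\circ\beta=\pi$, while $\pi\circ\alpha=\zeta_N\pi$, so $\alpha$ only covers the rotation $x\mapsto\zeta_N x$ of the base and fixes the fibre $\pi^{-1}(0)=\{Q_0,\dots,Q_{N-1}\}$ pointwise). First I would make $\kappa_0$ completely explicit: chasing endpoints under $\alpha$ and $\beta$ gives $\gamma_0\colon Q_0\to P_0$, $\beta\gamma_0\colon Q_1\to P_0$, $\alpha\beta\gamma_0\colon Q_1\to P_1$ and $\alpha\gamma_0\colon Q_0\to P_1$, so that $\kappa_0$ is the closed quadrilateral $Q_0\to P_0\to Q_1\to P_1\to Q_0$ lying over the wedge $[0,1]\cup[0,\zeta_N]\subset\C P^1$. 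Its four sides project onto the two radial segments, each covered twice, and I would record which sheet each lies in: $\gamma_0$ and $(\alpha\gamma_0)^{-1}$ lie in $\Omega_0$, while $(\beta\gamma_0)^{-1}$ and $\alpha\beta\gamma_0$ lie in $\Omega_1$. The translates are described the same way; $\beta\kappa_0$ projects again onto $[0,1]\cup[0,\zeta_N]$ but on the sheets $\Omega_1,\Omega_2$, whereas $\alpha\kappa_0$, $\alpha\beta\kappa_0$ and $\alpha\beta^{-1}\kappa_0$ project onto the rotated wedge $[0,\zeta_N]\cup[0,\zeta_N^2]$.

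The second step is to localise the intersections. Two of these quadrilaterals can meet only where their projections to $\C P^1$ overlap, that is, over the point $0$, over a common radial segment, or over a common ramification point $P_i$; and over a common segment the two loops in fact run along the \emph{same} arc of $F(N)$ up to orientation --- for instance $\kappa_0$ and $\alpha\kappa_0$ share the arcs underlying $\alpha\gamma_0$ and $\alpha\beta\gamma_0$, and $\kappa_0$ and $\alpha\beta^{-1}\kappa_0$ share the entire arc underlying $(\alpha\gamma_0)^{-1}$. I would therefore push one of the two loops slightly off the shared arcs, within its homotopy class so as not to change the homology class, after which every remaining intersection is transverse and clustered near finitely many of the points $Q_i$ and $P_i$. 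Near a ramification point $P_i$ the covering is locally $z\mapsto z^N$, so the incoming and outgoing arcs of each loop are rays through the origin whose directions differ by multiples of $2\pi/N$, and the local intersection index is read off from the cyclic order of the four relevant rays around $P_i$, the sign being fixed by the complex orientation of $F(N)$; the contributions near the $Q_i$ over $0$ are handled the same way, $0$ being unramified with sheets cyclically ordered $\Omega_0,\Omega_1,\dots$. Summing these local indices should give $(\kappa_0,\alpha\kappa_0)=1$, $(\kappa_0,\beta\kappa_0)=1$, $(\kappa_0,\alpha\beta\kappa_0)=-1$ and $(\kappa_0,\alpha\beta^{-1}\kappa_0)=0$, and the right-hand column of the display is then just the antisymmetry $(a,b)=-(b,a)$ of the intersection pairing.

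The step I expect to be the real obstacle is the coherent treatment of signs. Since a perturbed translate shares several arcs with $\kappa_0$ at once, its perturbation near one shared point is not independent of its perturbation near the others, so the local indices cannot be read off in isolation: a single global perturbation has to be chosen and the orientation conventions at all the $P_i$ and $Q_i$ kept consistent with the complex structure. This is most delicate for $\sigma=\alpha\beta^{-1}$, where $\kappa_0$ and $\sigma\kappa_0$ coincide along the whole arc underlying $(\alpha\gamma_0)^{-1}$: the value $0$ must be obtained by showing that the two transverse points created near its endpoints $P_1$ and $Q_0$ receive opposite signs and cancel, not by any disjointness. A cleaner alternative would be to fix an explicit cell decomposition of $F(N)$ adapted to $\pi$ --- a fundamental polygon assembled from lifts of the segments $[0,\zeta_N^i]$ --- express $\kappa_0$ and its translates on the $1$-skeleton, and read the intersection matrix off combinatorially; in either approach the built-in antisymmetry of Lemma~\ref{intersection} serves as a consistency check.
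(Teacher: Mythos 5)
Your geometric setup is correct in every checkable detail: the endpoint chase $Q_0\to P_0\to Q_1\to P_1\to Q_0$, the sheet assignments of the four arcs of $\kappa_0$ to $\Omega_0$ and $\Omega_1$, and the identification of the shared arcs (e.g.\ that $\kappa_0$ and $\alpha\kappa_0$ overlap along the arcs underlying $\alpha\gamma_0$ and $\alpha\beta\gamma_0$, and that $\kappa_0$ and $\alpha\beta^{-1}\kappa_0$ both contain the arc $(\alpha\gamma_0)^{-1}$). The strategy --- perturb one loop off the shared arcs and sum local indices read from the cyclic order of branches at the points $P_i$ and $Q_i$ --- is also the right one; note that the paper itself gives no proof of Lemma~\ref{intersection} but simply quotes it from Section 5 of \cite{K}, so the only argument to be judged here is yours.

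The gap is that the computation is never performed. Everything up to the sentence beginning ``Summing these local indices should give\dots'' is preparation; the four numerical values, which are the entire content of the lemma, are asserted rather than derived. You yourself identify the decisive difficulty --- a single global perturbation of the translate must be chosen, and the local signs at $Q_0$, $Q_1$, $P_0$, $P_1$, where several of the incident branches coincide pairwise, must be computed consistently with the complex orientation --- and then leave it unresolved. As written, nothing in the argument distinguishes the claimed values from, say, $(-1,-1,1,0)$ (a global orientation flip) or from values of larger absolute value coming from uncancelled crossings; in particular the cancellation needed for $(\kappa_0,\alpha\beta^{-1}\kappa_0)=0$ is flagged but not established. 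To close the gap you must either carry out the local index count at each of the finitely many points for each of the four pairs, with one fixed orientation convention throughout, or execute your alternative plan of writing the classes on the $1$-skeleton of an explicit cell decomposition adapted to $\pi$ and computing the pairing combinatorially. Either route works, but one of them has to be done; the antisymmetry check in the right-hand column is necessary but nowhere near sufficient to certify the left-hand column.
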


From this lemma, it is to show
\begin{prop}[Section 5 in \cite{K}]
\label{intersectin matrix}
We have $\{\alpha^i\beta^j\kappa_0\}_{i=0,1,\ldots,N-3,j=0,1,\ldots,N-2}$
is a basis of $H_1(F(N);\Z)$.
\end{prop}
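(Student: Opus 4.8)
The plan is to verify that the $(N-2)(N-1)$ classes $\alpha^i\beta^j\kappa_0$ with $0\le i\le N-3$ and $0\le j\le N-2$ are linearly independent over $\Z$ and span $H_1(F(N);\Z)$, which is free of rank $2g=(N-1)(N-2)$. First I would recall the action of $\alpha,\beta$ on $H_1(F(N);\Z)$. Since $\alpha$ and $\beta$ generate $(\Z/N\Z)^{\oplus 2}$ acting on the surface, $H_1(F(N);\Q)$ becomes a module over the group ring $\Q[(\Z/N\Z)^{\oplus 2}]$, and in fact it is well known (Lefschetz fixed point count, or the Eichler–Shimura/Rohrlich description of Fermat curves) that $H_1(F(N);\Q)$ is free of rank one over $\Q[(\Z/N\Z)^{\oplus 2}]/(1+\alpha+\cdots+\alpha^{N-1},\,1+\beta+\cdots+\beta^{N-1})$, with $\kappa_0$ a generator. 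That quotient ring has $\Q$-dimension exactly $(N-1)^2$... but the rank of $H_1$ is $(N-1)(N-2)$, so in fact one more relation holds: the classes satisfy $\sum_{j}\alpha^i\beta^j\kappa_0$-type relations coming from $(1+\beta+\cdots+\beta^{N-1})\kappa_0=0$ together with the relation forced by $\kappa_0=\gamma_0\cdot(\beta\gamma_0)^{-1}\cdot(\alpha\beta\gamma_0)\cdot(\alpha\gamma_0)^{-1}$, namely $(1-\alpha)(1-\beta)$ annihilates nothing but the $\gamma$'s chain-level; the upshot is that in $H_1$ one has $(1+\alpha+\cdots+\alpha^{N-1})\kappa_0=0$ and $(1+\beta+\cdots+\beta^{N-1})\kappa_0=0$, and these cut the rank down to $(N-2)(N-1)$.

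Concretely I would argue as follows. Writing $\kappa_0$ in terms of the paths $\gamma_{ij}:=\alpha^i\beta^j\gamma_0$, one sees directly from the definition that $\alpha^i\beta^j\kappa_0 = \gamma_{ij}-\gamma_{i,j+1}+\gamma_{i+1,j+1}-\gamma_{i+1,j}$ as a $1$-cycle in the chain complex built from the cell structure of the $N$-fold cover. These are exactly the boundaries of the "square" $2$-cells in the natural CW decomposition of $F(N)$ coming from the covering $\pi$, so the subgroup they generate is visibly related to $H_1$ by a short exact sequence; counting cells ($N$ vertices over each of finitely many base points, $N^2$-indexed edges $\gamma_{ij}$ modulo the $N$ relations per row and per column) gives the rank $(N-2)(N-1)$ on the nose. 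Thus the stated family has the right cardinality and generates; freeness over $\Z$ then follows because $H_1(F(N);\Z)$ is torsion-free and the family generates a finite-index subgroup — and the index is forced to be $1$ by the rank count. Alternatively, and more in the spirit of the previous lemma, I would use Lemma \ref{intersection}: the intersection pairing restricted to the family $\{\alpha^i\beta^j\kappa_0\}$ has a block/circulant structure determined entirely by the four values $(\kappa_0,\alpha\kappa_0)$, $(\kappa_0,\beta\kappa_0)$, $(\kappa_0,\alpha\beta\kappa_0)$, $(\kappa_0,\alpha\beta^{-1}\kappa_0)$ and equivariance $(\alpha^a\beta^b x,\alpha^c\beta^d y)=(x,\alpha^{c-a}\beta^{d-b}y)$; I would compute the determinant of this intersection matrix and check it equals $\pm1$, which simultaneously proves the family is a $\Z$-basis (a full set of $2g$ classes whose intersection matrix is unimodular must be a basis of the torsion-free group $H_1(F(N);\Z)$).

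The main obstacle is pinning down why the truncation to $0\le i\le N-3$, $0\le j\le N-2$ (rather than the full $N\times N$ range) gives a \emph{basis} and not merely a spanning set — i.e., identifying the precise $\Z$-module relations satisfied by $\{\alpha^i\beta^j\kappa_0\}$ and showing no further relations survive. This is where I would lean hardest on Kamata's setup: either (a) exhibit the explicit relations $\sum_{i=0}^{N-1}\alpha^i\kappa_0=0$ and $\sum_{j=0}^{N-1}\beta^j\kappa_0=0$ in $H_1$ (coming from the fact that $\pi$-pushforward of each such sum is nullhomologous in $\C P^1$, so the sum lies in the kernel of $\pi_*$, and a dimension count shows it must vanish), then observe these relations are exactly what is needed to reduce the generating set $\{\alpha^i\beta^j\kappa_0\}_{0\le i,j\le N-1}$ down to the claimed index ranges; or (b) carry out the unimodularity determinant computation of the intersection matrix directly, using the circulant structure to factor the determinant as a product over pairs of $N$-th roots of unity of an explicit $2\times 2$ symbol and checking the product is $\pm1$. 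Either way, once the count of linearly independent classes matches $2g$, the result is immediate since $H_1(F(N);\Z)\cong\Z^{2g}$ is torsion-free.
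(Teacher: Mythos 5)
The paper itself gives no proof of this proposition --- it is quoted from Section 5 of Kamata's paper, with the following remark that the intersection matrix of the stated family is Kamata's matrix $K$ --- so your proposal has to stand on its own, and as written it contains one outright error and one acknowledged but unresolved gap. The error is in your rank count: the two relations $(1+\alpha+\cdots+\alpha^{N-1})\kappa_0=0$ and $(1+\beta+\cdots+\beta^{N-1})\kappa_0=0$ (which do hold, in fact already at the chain level) cut the rank of the cyclic module generated by $\kappa_0$ over $\Z[(\Z/N\Z)^{2}]$ only from $N^2$ down to $(N-1)^2$, not to $(N-1)(N-2)=2g$. The missing third relation is $(1+\alpha\beta+\cdots+(\alpha\beta)^{N-1})\kappa_0=0$, which kills the characters $(a,b)$ with $a+b\equiv 0 \bmod N$; it is exactly this relation that lets one eliminate the extra row $i=N-2$ and explains the asymmetric truncation $i\le N-3$, $j\le N-2$. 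Relatedly, your identification of the squares $\gamma_{i,j}-\gamma_{i,j+1}+\gamma_{i+1,j+1}-\gamma_{i+1,j}$ as ``boundaries of the square $2$-cells'' cannot be right: if they were boundaries of $2$-cells they would all vanish in $H_1$. They are cycles in the $1$-skeleton (the bipartite graph on the points $P_i$, $Q_j$ with edges $\alpha^i\beta^j\gamma_0$); the actual $2$-cells are the $N$ regions lying over $\Omega$, and it is their boundaries that impose the third relation.

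Even granting the three relations and the resulting count of $(N-1)(N-2)$ rational dimensions, you must still show that the truncated family generates $H_1(F(N);\Z)$ integrally --- a $\Q$-basis inside a lattice need not be a $\Z$-basis, and torsion-freeness of $H_1$ does not by itself force the index to be $1$. Your fallback route --- assembling the intersection matrix of the family from Lemma \ref{intersection} via equivariance and checking its determinant is $\pm 1$, so that writing the family as $A$ times a symplectic basis gives $(\det A)^2=1$ and hence a $\Z$-basis --- is the correct mechanism, and is essentially what the paper's citation of Kamata's matrix $K$ amounts to; but you neither carry out the determinant computation nor justify the input it requires, namely that $(\kappa_0,\alpha^i\beta^j\kappa_0)=0$ for all $(i,j)$ other than those listed in Lemma \ref{intersection}. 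Until one of these two routes is actually executed, what you have is a plausible plan rather than a proof, as you yourself concede in your final paragraph.
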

\begin{rem}
Intersection matrix of
$\{\alpha^i\beta^j\kappa_0\}_{i=0,1,\ldots,N-3,j=0,1,\ldots,N-2}$
is given by $K$ in case (i) in
\cite{K}.
\end{rem}

It is a known fact
that $\{\omega^{\prime}_{r,s}=
x^{r-1}y^{s-1}dx/{y^{N-1}}\}_{r,s\geq1,r+s\leq N-1}$
is a basis of $H^{1,0}$ of $F(N)$.
It is clear that
$$\int_{\alpha^i\beta^j\gamma_0}\omega^{\prime}_{r,s}
=\zeta_N^{ir+js}\int_{\gamma_0}\omega^{\prime}_{r,s}
=\zeta_N^{ir+js}\frac{B(r/N,s/N)}{N}.$$
The integral of $\omega^{\prime}_{r,s}$
along $\alpha^i\beta^j\kappa_0$ is obtained as follows.
\begin{prop}
[Appendix in \cite{Gr}]
\label{Rohrlich}
We have
$$\int_{\alpha^i\beta^j\kappa_0}\omega^{\prime}_{r,s}
=B(r/N,s/N)(1-\zeta_N^r)(1-\zeta_N^s)\zeta_N^{ir+js}/{N}.$$
\end{prop}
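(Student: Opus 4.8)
The plan is to reduce the integral over $\alpha^i\beta^j\kappa_0$ to integrals over $\gamma_0$ by exploiting the equivariance of $\omega^{\prime}_{r,s}$ under the automorphisms $\alpha$ and $\beta$, and then to sum the four segments making up $\kappa_0$.

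First I would record how $\alpha$ and $\beta$ act on the form. In affine coordinates $\alpha(x,y)=(\zeta_N x,y)$ and $\beta(x,y)=(x,\zeta_N y)$, and writing $\omega^{\prime}_{r,s}=x^{r-1}y^{s-N}\,dx$, a direct pullback computation using $\zeta_N^{N}=1$ gives $\alpha^{\ast}\omega^{\prime}_{r,s}=\zeta_N^{r}\omega^{\prime}_{r,s}$ and $\beta^{\ast}\omega^{\prime}_{r,s}=\zeta_N^{s}\omega^{\prime}_{r,s}$. Consequently $(\alpha^i\beta^j)^{\ast}\omega^{\prime}_{r,s}=\zeta_N^{ir+js}\omega^{\prime}_{r,s}$, and by the change-of-variables identity $\int_{\phi\gamma}\omega=\int_{\gamma}\phi^{\ast}\omega$ for a path $\gamma$ and a holomorphic automorphism $\phi$, it suffices to compute $\int_{\kappa_0}\omega^{\prime}_{r,s}$ and then multiply by $\zeta_N^{ir+js}$.

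Next I would expand $\kappa_0=\gamma_0\cdot(\beta\gamma_0)^{-1}\cdot(\alpha\beta\gamma_0)\cdot(\alpha\gamma_0)^{-1}$. Because $\omega^{\prime}_{r,s}$ is a single holomorphic $1$-form, the integral over a concatenation is additive and the integral over a reversed path changes sign, so
\begin{align*}
\int_{\kappa_0}\omega^{\prime}_{r,s}
&=\int_{\gamma_0}\omega^{\prime}_{r,s}-\int_{\beta\gamma_0}\omega^{\prime}_{r,s}
+\int_{\alpha\beta\gamma_0}\omega^{\prime}_{r,s}-\int_{\alpha\gamma_0}\omega^{\prime}_{r,s}.
\end{align*}
Applying the displayed identity $\int_{\alpha^i\beta^j\gamma_0}\omega^{\prime}_{r,s}=\zeta_N^{ir+js}B(r/N,s/N)/N$ to each of the four terms turns the right-hand side into $(B(r/N,s/N)/N)(1-\zeta_N^{s}+\zeta_N^{r+s}-\zeta_N^{r})$, which factors as $(B(r/N,s/N)/N)(1-\zeta_N^{r})(1-\zeta_N^{s})$. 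Multiplying through by $\zeta_N^{ir+js}$ yields precisely the asserted formula.

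The computation is essentially mechanical once the equivariance relations and the endpoint bookkeeping for $\kappa_0$ are in place; the only points requiring genuine care are verifying that the four subpaths concatenate into a closed loop based at $Q_0$ (so that the line integral is unambiguous), and that each reversed segment is assigned the correct sign. The evaluation $\int_{\gamma_0}\omega^{\prime}_{r,s}=B(r/N,s/N)/N$ underlying the displayed identity itself follows from the substitution $u=t^N$ in $\int_0^1 t^{r-1}(1-t^N)^{(s-N)/N}\,dt$, which is exactly the Beta integral $B(r/N,s/N)/N$.
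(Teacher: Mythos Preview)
Your argument is correct and is the standard derivation. Note, however, that the paper does not actually prove this proposition: it is quoted from Rohrlich's appendix to \cite{Gr}, and the paper only records, immediately before the statement, the identity $\int_{\alpha^i\beta^j\gamma_0}\omega^{\prime}_{r,s}=\zeta_N^{ir+js}B(r/N,s/N)/N$ that you use as input. Your proof supplies exactly the missing routine step---expanding $\kappa_0$ into its four segments, applying the eigenform relations $\alpha^{\ast}\omega^{\prime}_{r,s}=\zeta_N^{r}\omega^{\prime}_{r,s}$, $\beta^{\ast}\omega^{\prime}_{r,s}=\zeta_N^{s}\omega^{\prime}_{r,s}$, and factoring $1-\zeta_N^{s}+\zeta_N^{r+s}-\zeta_N^{r}=(1-\zeta_N^{r})(1-\zeta_N^{s})$---so there is nothing to compare against beyond noting that this is precisely the computation the citation stands in for.
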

We denote the $1$-form
$N\omega^{\prime}_{r,s}/{B^N_{r,s}}$
by
$\omega_{r,s}$.
Here, $B^N_{r,s}=B(r/N,s/N)$.
This implies
$\displaystyle \int_{\alpha^i\beta^j\kappa_0}\omega_{r,s}
\in \Z[\zeta_N]$.

Let $f_{r,s}$ 
be a real $1$-form on $[0,1]$ defined by
$\gamma_0^{\ast}\omega_{r,s}^{\prime}
=t^{r-1}\big(\sqrt[N]{1-t^N}\big)^{s-N}dt$
 for $r,s\geq1,r+s\leq N-1$.
The iterated integral
$\displaystyle
\int_{\gamma_0}\omega_{r,s}\omega_{l,m}=N^2\int_{\gamma}f_{r,s}f_{l,m}\Big/{(B^N_{r,s}B^{N}_{l,m})}$
is denoted by $x_{r,s,l,m}$.
Iterated integrals of $\omega_{r,s}$
 along the loop $\alpha^i\beta^j\kappa_0$
can be computed.
\begin{lem}\label{iterated integral of Fermat1}
We consider $\alpha^i\beta^j\kappa_0$ as a loop at the base point
$Q_j$.
Then the iterated integral 
$\displaystyle\int_{\alpha^i\beta^j\kappa_0}\omega_{r,s}\omega_{l,m}$
is given by
$$\zeta_N^{i(r+l)+j(s+m)}
\big\{(1-\zeta_N^{r+l})(1-\zeta_N^{s+m})
x_{r,s,l,m}
+(1-\zeta_N^s)(\zeta_N^{r+l}+\zeta_N^{l+m}
-\zeta_N^m-\zeta_N^l)\big\}.$$
\end{lem}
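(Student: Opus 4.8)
The plan is to reduce to the loop $\kappa_0$ based at $Q_0$, expand it into four elementary arcs, and then apply the standard functoriality, path-reversal, and shuffle identities for Chen's iterated integrals.

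First I would use that for any holomorphic automorphism $\phi$ of $F(N)$ and any holomorphic $1$-forms $\eta_1,\eta_2$ one has $\int_{\phi\gamma}\eta_1\eta_2=\int_{\gamma}\phi^{*}\eta_1\,\phi^{*}\eta_2$, which is just a reparametrization and holds irrespective of base points ($\phi$ carries the base point of $\gamma$ to that of $\phi\gamma$). Since $\omega'_{r,s}=x^{r-1}y^{s-N}dx$ gives $\alpha^{*}\omega_{r,s}=\zeta_N^{\,r}\omega_{r,s}$ and $\beta^{*}\omega_{r,s}=\zeta_N^{\,s}\omega_{r,s}$, taking $\phi=\alpha^{i}\beta^{j}$, which sends $Q_0$ to $Q_j$, yields
$$\int_{\alpha^{i}\beta^{j}\kappa_0}\omega_{r,s}\omega_{l,m}=\zeta_N^{\,i(r+l)+j(s+m)}\int_{\kappa_0}\omega_{r,s}\omega_{l,m},$$
so it suffices to evaluate $\int_{\kappa_0}\omega_{r,s}\omega_{l,m}$.

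Next I would write $\kappa_0=c_1c_2c_3c_4$ with $c_1=\gamma_0$, $c_2=(\beta\gamma_0)^{-1}$, $c_3=\alpha\beta\gamma_0$, $c_4=(\alpha\gamma_0)^{-1}$ (a quick check shows the arcs compose, running $Q_0\to P_0\to Q_1\to P_1\to Q_0$), and apply the composition-of-paths rule
$$\int_{c_1c_2c_3c_4}\eta_1\eta_2=\sum_{p=1}^{4}\int_{c_p}\eta_1\eta_2+\sum_{1\le p<q\le4}\int_{c_p}\eta_1\int_{c_q}\eta_2.$$
Each $c_p$ has the form $\psi\gamma_0^{\pm1}$ with $\psi\in\langle\alpha,\beta\rangle$, so functoriality together with $\int_{\gamma^{-1}}\eta=-\int_{\gamma}\eta$ reduces every length-one integral to a root-of-unity multiple of $\int_{\gamma_0}\omega_{r,s}$, and the normalization $\omega_{r,s}=N\omega'_{r,s}/B^{N}_{r,s}$ with $\int_{\gamma_0}\omega'_{r,s}=B^{N}_{r,s}/N$ gives $\int_{\gamma_0}\omega_{r,s}=1$; in particular the six cross terms become explicit monomials in $\zeta_N$. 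For the four length-two integrals I would combine functoriality with the reversal identity $\int_{\gamma^{-1}}\eta_1\eta_2=\int_{\gamma}\eta_2\eta_1$; this produces $\int_{\gamma_0}\omega_{r,s}\omega_{l,m}=x_{r,s,l,m}$ and its transpose $\int_{\gamma_0}\omega_{l,m}\omega_{r,s}$, and the shuffle relation $\int_{\gamma_0}\omega_{r,s}\omega_{l,m}+\int_{\gamma_0}\omega_{l,m}\omega_{r,s}=\big(\int_{\gamma_0}\omega_{r,s}\big)\big(\int_{\gamma_0}\omega_{l,m}\big)=1$ lets me replace the transpose by $1-x_{r,s,l,m}$.

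Finally I would collect everything. The coefficient of $x_{r,s,l,m}$ assembles from the four length-two pieces into $1-\zeta_N^{\,s+m}-\zeta_N^{\,r+l}+\zeta_N^{\,r+l+s+m}=(1-\zeta_N^{\,r+l})(1-\zeta_N^{\,s+m})$, while the remaining constant (the $\zeta_N^{\,r+l}+\zeta_N^{\,s+m}$ coming from the $1-x_{r,s,l,m}$ terms, together with the six cross-term monomials) collapses to $(1-\zeta_N^{\,s})(\zeta_N^{\,r+l}+\zeta_N^{\,l+m}-\zeta_N^{\,m}-\zeta_N^{\,l})$; multiplying by $\zeta_N^{\,i(r+l)+j(s+m)}$ gives the claim. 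The only real work is this last bookkeeping — keeping the orientations of $c_2,c_4$ straight, tracking the six cross terms, and recognizing the resulting Laurent polynomial in $\zeta_N$ in factored form — so that is where I expect the main, though essentially mechanical, difficulty to lie. I would also remark at the outset that all the iterated integrals involved converge despite the $y^{s-N}$ pole of $\omega'_{r,s}$ at the endpoint $P_0$ of $\gamma_0$, which holds because $1\le s\le N-2$, exactly as for the Beta-integral $\int_{\gamma_0}\omega'_{r,s}$.
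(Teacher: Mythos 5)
Your proposal is correct and follows essentially the same route as the paper: reduce to $\int_{\kappa_0}\omega_{r,s}\omega_{l,m}$ via equivariance, expand over the four arcs with the composition rule, normalize so $\int_{\gamma_0}\omega_{r,s}=1$, and handle the reversed arcs via $\int_{\gamma_0^{-1}}\omega_{r,s}\omega_{l,m}=1-x_{r,s,l,m}$ (the paper derives this from $\int_{\gamma_0\cdot\gamma_0^{-1}}\omega_{r,s}\omega_{l,m}=0$ rather than quoting reversal plus shuffle, but these are equivalent). The bookkeeping you describe matches the paper's computation exactly.
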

\begin{proof}
It is clear that
$\displaystyle\int_{\alpha^i\beta^j\kappa_0}\omega_{r,s}\omega_{l,m}
=\zeta_N^{i(r+l)+j(s+m)}\int_{\kappa_0}\omega_{r,s}\omega_{l,m}$.
We have only to compute 
$\displaystyle\int_{\kappa_0}\omega_{r,s}\omega_{l,m}$.
We denote
$\displaystyle
\left(\int_{\ell_1}+\int_{\ell_2}\right)\omega_{r,s}
\omega_{l,m}=
\int_{\ell_1}\omega_{r,s}
\omega_{l,m}+
\int_{\ell_2}\omega_{r,s}\omega_{l,m}$
only here.

Proposition \ref{Rohrlich},
the equation
$\displaystyle\int_{\gamma_0}\omega_{r,s}=1$, and
$$\int_{\gamma_0}\omega_{r,s}\omega_{l,m}
+\int_{\gamma_0^{-1}}\omega_{r,s}\omega_{l,m}
+\int_{\gamma_0}\omega_{r,s}\int_{\gamma_0^{-1}}\omega_{l,m}
=\int_{\gamma_0\cdot\gamma_0^{-1}}\omega_{r,s}\omega_{l,m}=0$$
give us the equation\\
$\displaystyle
\int_{\kappa_0}\omega_{r,s}\omega_{l,m}
=\left(\int_{\gamma_0}
+\int_{(\beta\gamma_0)^{-1}}
+\int_{\alpha\beta\gamma_0}
+\int_{(\alpha\gamma_0)^{-1}}\right)\omega_{r,s}\omega_{l,m}\\
+\int_{\gamma_0}\omega_{r,s}
\left(\int_{(\beta\gamma_0)^{-1}}+\int_{\alpha\beta\gamma_0}
+\int_{(\alpha\gamma_0)^{-1}}\right)\omega_{l,m}
+\int_{(\beta\gamma_0)^{-1}}\omega_{r,s}
\left(\int_{\alpha\beta\gamma_0}
+\int_{(\alpha\gamma_0)^{-1}}\right)\omega_{l,m}\\
\hspace*{250pt}+\int_{\alpha\beta\gamma_0}\omega_{r,s}
\int_{(\alpha\gamma_0)^{-1}}\omega_{l,m}\\
=\left(\int_{\gamma_0}
+\zeta_N^{s+m}\int_{\gamma_0^{-1}}
+\zeta_N^{r+s+l+m}\int_{\gamma_0}
+\zeta_N^{r+l}\int_{\gamma_0^{-1}}\right)\omega_{r,s}\omega_{l,m}\\
+\int_{\gamma_0}\omega_{r,s}
\left(-\zeta_N^{m}\int_{\gamma_0}+\zeta_N^{l+m}\int_{\gamma_0}
-\zeta_N^{l}\int_{\gamma_0}\right)\omega_{l,m}
-\zeta_N^{s}\int_{\gamma_0}\omega_{r,s}
\left(\zeta_N^{l+m}\int_{\gamma_0}
-\zeta_N^{l}\int_{\gamma_0}\right)\omega_{l,m}\\
\hspace*{250pt}-
\left(\zeta_N^{r+s}\int_{\gamma_0}\omega_{r,s}\right)
\zeta_N^{l}\int_{\gamma_0}\omega_{l,m}
\\
=\left\{(1+\zeta_N^{r+s+l+m})\int_{\gamma_0}
-(\zeta_N^{s+m}+\zeta_N^{r+l})\int_{\gamma_0}
\right\}\omega_{r,s}\omega_{l,m}+
(\zeta_N^{s+m}+\zeta_N^{r+l})\int_{\gamma_0}\omega_{r,s}
\int_{\gamma_0}\omega_{l,m}\\
\hspace*{200pt}
-\zeta_N^{m}+\zeta_N^{l+m}
-\zeta_N^{l}
-\zeta_N^{s+l+m}
+\zeta_N^{s+l}
-\zeta_N^{r+s+l}\\
=(1-\zeta_N^{r+l})(1-\zeta_N^{s+m})
\int_{\gamma_0}\omega_{r,s}\omega_{l,m}
+(1-\zeta_N^s)(\zeta_N^{r+l}+\zeta_N^{l+m}
-\zeta_N^m-\zeta_N^l).
$
\end{proof}

We define a path $\gamma_j$
by $\gamma_0\cdot(\beta^j\gamma_0)^{-1}$.
Let $\gamma_{i,j}$ denote the loop
$\gamma_j\cdot
(\alpha^i\beta^j\kappa_0)
\cdot\gamma_j^{-1}$.
Using the above lemma, we have
iterated integrals of $\omega_{r,s}$ along the loop
$\gamma_{i,j}$ at
the common base point $Q_0$.
\begin{thm}\label{iterated integral of Fermat2}
The iterated integral 
$\displaystyle\int_{\gamma_{i,j}}\omega_{r,s}\omega_{l,m}$
is given by
\begin{align*}
\zeta_N^{i(r+l)+j(s+m)}&
\big\{(1-\zeta_N^{l+r})(1-\zeta_N^{m+s})
x_{r,s,l,m}
+(1-\zeta_N^s)
(\zeta_N^{l+r}+\zeta_N^{l+m}-\zeta_N^m-\zeta_N^l)
\big\}\\
+&(1-\zeta_N^{js})(1-\zeta_N^l)(1-\zeta_N^m)\zeta_N^{il+jm}
-(1-\zeta_N^{jm})
(1-\zeta_N^r)(1-\zeta_N^s)\zeta_N^{ir+js}.
\end{align*}
\end{thm}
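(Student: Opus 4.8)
The plan is to express the integral along the conjugated loop $\gamma_{i,j}=\gamma_j\cdot(\alpha^i\beta^j\kappa_0)\cdot\gamma_j^{-1}$ in terms of quantities already under control, using nothing beyond the elementary calculus of Chen's iterated integrals \cite{C}. I will use three standard rules: the concatenation rule $\int_{c_1\cdot c_2}\eta_1\eta_2=\int_{c_1}\eta_1\eta_2+\int_{c_1}\eta_1\int_{c_2}\eta_2+\int_{c_2}\eta_1\eta_2$; the reversal rules $\int_{c^{-1}}\eta=-\int_c\eta$ and $\int_{c^{-1}}\eta_1\eta_2=\int_c\eta_2\eta_1$; and the shuffle identity $\big(\int_c\eta_1\big)\big(\int_c\eta_2\big)=\int_c\eta_1\eta_2+\int_c\eta_2\eta_1$. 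Applying the concatenation rule first to $\gamma_j\cdot(\alpha^i\beta^j\kappa_0)$ and then to the result composed with $\gamma_j^{-1}$, and using the reversal and shuffle rules to rewrite $\int_{\gamma_j^{-1}}\omega_{r,s}\omega_{l,m}=\int_{\gamma_j}\omega_{l,m}\omega_{r,s}=\int_{\gamma_j}\omega_{r,s}\int_{\gamma_j}\omega_{l,m}-\int_{\gamma_j}\omega_{r,s}\omega_{l,m}$, all iterated contributions of $\gamma_j$ cancel and one is left with the conjugation formula
\begin{equation*}
\int_{\gamma_{i,j}}\omega_{r,s}\omega_{l,m}
=\int_{\alpha^i\beta^j\kappa_0}\omega_{r,s}\omega_{l,m}
+\Big(\int_{\gamma_j}\omega_{r,s}\Big)\Big(\int_{\alpha^i\beta^j\kappa_0}\omega_{l,m}\Big)
-\Big(\int_{\gamma_j}\omega_{l,m}\Big)\Big(\int_{\alpha^i\beta^j\kappa_0}\omega_{r,s}\Big).
\end{equation*}

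It then remains to evaluate the four quantities on the right. Since $\alpha^i\beta^j\kappa_0$ is a loop based at $Q_j$, which is the terminal point of $\gamma_j=\gamma_0\cdot(\beta^j\gamma_0)^{-1}$, the base points match and $\int_{\alpha^i\beta^j\kappa_0}\omega_{r,s}\omega_{l,m}$ is given verbatim by Lemma~\ref{iterated integral of Fermat1}, supplying the first brace in the asserted formula. Rescaling Proposition~\ref{Rohrlich} by the factor $N/B^N_{r,s}$ defining $\omega_{r,s}$ gives $\int_{\alpha^i\beta^j\kappa_0}\omega_{r,s}=(1-\zeta_N^r)(1-\zeta_N^s)\zeta_N^{ir+js}$, and likewise for $\omega_{l,m}$. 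For the ordinary periods of $\gamma_j$, use $\int_{\gamma_0}\omega_{r,s}=1$ together with $\beta^{\ast}\omega_{r,s}=\zeta_N^{s}\omega_{r,s}$, so that $\int_{(\beta^j\gamma_0)^{-1}}\omega_{r,s}=-\zeta_N^{js}$ and hence $\int_{\gamma_j}\omega_{r,s}=1-\zeta_N^{js}$, and similarly $\int_{\gamma_j}\omega_{l,m}=1-\zeta_N^{jm}$.

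Substituting these values into the conjugation formula yields exactly the displayed expression, the two correction terms being $(1-\zeta_N^{js})(1-\zeta_N^l)(1-\zeta_N^m)\zeta_N^{il+jm}$ and $-(1-\zeta_N^{jm})(1-\zeta_N^r)(1-\zeta_N^s)\zeta_N^{ir+js}$ with the signs as stated. The one point demanding care is the bookkeeping in the first step: under path reversal the order of the two forms is swapped, so each $\int_{\gamma_j^{-1}}\omega_{r,s}\omega_{l,m}$ must be traded back via the shuffle identity, and one must verify that every $\int_{\gamma_j}\omega_{r,s}\omega_{l,m}$ and $\int_{\gamma_j}\omega_{l,m}\omega_{r,s}$ term indeed disappears. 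I expect that cancellation to be the main (albeit routine) obstacle; once the conjugation formula is established the theorem follows by direct substitution.
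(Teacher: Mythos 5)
Your proposal is correct and follows essentially the same route as the paper: the paper's proof consists precisely of the conjugation formula $\int_{\gamma_{i,j}}\omega_{r,s}\omega_{l,m}=\int_{\alpha^i\beta^j\kappa_0}\omega_{r,s}\omega_{l,m}+\int_{\gamma_j}\omega_{r,s}\int_{\alpha^i\beta^j\kappa_0}\omega_{l,m}-\int_{\alpha^i\beta^j\kappa_0}\omega_{r,s}\int_{\gamma_j}\omega_{l,m}$ (stated without derivation) combined with Lemma \ref{iterated integral of Fermat1}, and your derivation of that formula from the concatenation, reversal, and shuffle rules, together with the evaluations $\int_{\gamma_j}\omega_{r,s}=1-\zeta_N^{js}$ and the rescaled Proposition \ref{Rohrlich}, is exactly the intended argument with the omitted details filled in.
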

Tretkoff and Tretkoff \cite{T-T}
computed the quadratic periods with another
base point by similar computation.
\begin{proof}
We have
\begin{align*}
&\int_{\gamma_{i,j}}\omega_{r,s}\omega_{l,m}
=\int_{\alpha^i\beta^j\kappa_0}\omega_{r,s}\omega_{l,m}
+\int_{\gamma_j}\omega_{r,s}
\int_{\alpha^i\beta^j\kappa_0}\omega_{l,m}
-\int_{\alpha^i\beta^j\kappa_0}\omega_{r,s}
\int_{\gamma_j}\omega_{l,m}.
\end{align*}
From
this equation and Lemma \ref{iterated integral of Fermat1},
the result follows.
\end{proof}

For the numerical calculation of 
$\displaystyle x_{r,s,l,m}$,
we recall the generalized hypergeometric function ${}_3F_2$.
Let $\Gamma(\tau)$ denote 
the gamma function 
$\displaystyle\int_{0}^{\infty}e^{-t}t^{\tau-1}dt$
for $\tau>0$.
We define $(\alpha, n)$ by
$\G(\alpha +n)/{\G(\alpha)}$
for $n\in \Z_{\geq 0}$.
For $x\in\{z\in \C; |z|<1\}$
and $\alpha_1,\alpha_2,\alpha_3,\beta_1,\beta_2>-1$,
the generalized hypergeometric function ${}_3F_2$
is defined by
$${}_3F_2{\Big(}
\left.
\begin{array}{c}
\alpha_1,\alpha_2,\alpha_3\\
\beta_1,\beta_2
\end{array}
\right.
;x{\Big)}
=\sum_{n=0}^{\infty}{{(\alpha_1,n)(\alpha_2,n)(\alpha_3,n)}
\over{(\beta_1,n)(\beta_2,n)(1,n)}}x^n.
$$
\begin{prop}\label{hypergeometric function}
Let $\Delta$ be a $1$-simplex
$\{(u,v)\in\R^2;0\leq v\leq 1,
0\leq u\leq v\}$.
If $a,b,p,q>0, b<1$, then we have\\
$\displaystyle
\int_{\Delta} u^{a-1}(1-u)^{b-1}
v^{p-1}(1-v)^{q-1}dudv
={{B(a+p,q)}\over{a}}
\lim_{
\begin{subarray}{c}
t\to 1-0\\
t\in \R
\end{subarray}
}{}_3F_2{\Big(}
\left.
\begin{array}{c}
a,1-b,a+p\\
1+a,a+p+q
\end{array}
\right.
;t{\Big)}$.
\end{prop}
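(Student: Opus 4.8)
The plan is to reduce the integral over the simplex to an integral over the unit square, expand one factor in a power series with nonnegative coefficients, integrate term by term, and recognize the resulting series as the value of ${}_3F_2$ at the argument $1$. First I would substitute $u=vw$ with $w\in[0,1]$, so that $du=v\,dw$ (Jacobian $v$) and $\Delta$ becomes the square $[0,1]^2$:
\[
\int_{\Delta}u^{a-1}(1-u)^{b-1}v^{p-1}(1-v)^{q-1}\,du\,dv
=\int_{0}^{1}\!\!\int_{0}^{1}w^{a-1}v^{a+p-1}(1-v)^{q-1}(1-vw)^{b-1}\,dw\,dv.
\]
Since $0<b<1$, the Maclaurin expansion $(1-vw)^{b-1}=\sum_{n\ge 0}\frac{(1-b,n)}{(1,n)}(vw)^{n}$ has nonnegative coefficients and converges on $[0,1]^2$ off the single corner $vw=1$. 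Multiplying by the nonnegative factor $w^{a-1}v^{a+p-1}(1-v)^{q-1}$ and applying Tonelli's theorem — the total integral being finite and equal to the convergent left-hand side because $a,p,q>0$ and $b<1$ — I may integrate term by term, using $\int_{0}^{1}w^{a+n-1}\,dw=1/(a+n)$ and $\int_{0}^{1}v^{a+p+n-1}(1-v)^{q-1}\,dv=B(a+p+n,q)$.

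This yields $\sum_{n\ge 0}\frac{(1-b,n)}{(1,n)}\cdot\frac{1}{a+n}\cdot B(a+p+n,q)$. Next I would rewrite the summand with $(\alpha,n)=\G(\alpha+n)/\G(\alpha)$: from $(a,n)/(a+1,n)=a/(a+n)$ one gets $\frac{1}{a+n}=\frac{(a,n)}{a\,(a+1,n)}$, and from $B(a+p+n,q)=\G(a+p+n)\G(q)/\G(a+p+q+n)$ one gets $B(a+p+n,q)=B(a+p,q)\,\frac{(a+p,n)}{(a+p+q,n)}$. Substituting gives
\[
\int_{\Delta}u^{a-1}(1-u)^{b-1}v^{p-1}(1-v)^{q-1}\,du\,dv
=\frac{B(a+p,q)}{a}\sum_{n\ge 0}\frac{(a,n)(1-b,n)(a+p,n)}{(1+a,n)(a+p+q,n)(1,n)},
\]
which is $\frac{B(a+p,q)}{a}$ times the series defining ${}_3F_2$ with upper parameters $a,1-b,a+p$ and lower parameters $1+a,a+p+q$, evaluated at $1$. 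This series converges at $1$ because the excess of lower over upper parameters is $(1+a)+(a+p+q)-\big(a+(1-b)+(a+p)\big)=b+q>0$.

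Finally, since every summand is nonnegative and $t^{n}\uparrow 1$ as $t\to 1-0$, the monotone convergence theorem (equivalently Abel's theorem for power series) identifies $\lim_{t\to 1-0}{}_3F_2(\,\cdots\,;t)$ with the value of the series at $1$, which is exactly the displayed formula. The step I expect to be the main obstacle is the rigorous justification of term-by-term integration up to the boundary $\{vw=1\}$, where the binomial series diverges and its convergence is non-uniform near the corner $(1,1)$; the hypothesis $b<1$ is precisely what saves the argument, making all terms nonnegative so that Tonelli and monotone convergence apply, and one also has to verify carefully the Pochhammer-symbol rewriting and the convergence condition $b+q>0$ at the argument $1$.
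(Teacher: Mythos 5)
Your proof is correct and follows essentially the same route as the paper: both expand the factor $(1-u)^{b-1}$ (in your case $(1-vw)^{b-1}$ after the substitution $u=vw$) as a binomial series with nonnegative coefficients, integrate term by term to reach the series $\sum_{n\ge 0}\frac{(1-b,n)}{(1,n)}\frac{1}{a+n}B(a+p+n,q)$, and rewrite it via Pochhammer symbols as the stated ${}_3F_2$ value. Your version merely adds the (welcome) explicit justifications via Tonelli, the convergence condition $b+q>0$, and Abel's theorem, which the paper leaves implicit.
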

\begin{proof}
Using the equation
\begin{align*}
\int_{0}^{v}u^{a-1}(1-u)^{b-1}du
=&\int_{0}^{v}\sum_{n=0}^{\infty}u^{a-1}
\left(
\begin{array}{c}
b-1\\
n
\end{array}
\right)
(-u)^ndu\\
=&\sum_{n=0}^{\infty}
{{(1-b,n)}\over{(1,n)}}\int_{0}^{v}
u^{n+a-1}du,
\end{align*}
we compute as follows:
\begin{align*}
&
\int_{0}^{1}v^{p-1}(1-v)^{q-1}
\int_{0}^{v}u^{a-1}(1-u)^{b-1}du
dv\\
=&
\int_{0}^{1}v^{p-1}(1-v)^{q-1}
\sum_{n=0}^{\infty}
{{(1-b,n)}\over{(1,n)}}\int_{0}^{v}
u^{n+a-1}du\\
=&\sum_{n=0}^{\infty}
\int_{0}^{1}v^{a+p+n-1}(1-v)^{q-1}
{{(1-b,n)}\over{(1,n)}}
{{1}\over{a+n}}dv\\
=&\sum_{n=0}^{\infty}B(a+p+n,q){{(1-b,n)}\over{(1,n)}}
{{1}\over{a+n}}\\
=&
\sum_{n=0}^{\infty}{{\G(a+p+n)\G(q)}
\over{\G(a+p+q+n)}}{{(1-b,n)}\over{(1,n)}}
{{1}\over{a+n}}\\
=&
{{\G(a+p)\G(q)}\over{a\G(a+p+q)}}
\sum_{n=0}^{\infty}{{a}\over{a+n}}{{\G(a+p+q)}\over{\G(a+p+q+n)}}
{{\G(a+p+n)}\over{\G(a+p)}}{{(1-b,n)}\over{(1,n)}}\\
=&
{{B(a+p,q)}\over{a}}
\lim_{
\begin{subarray}{c}
t\to 1-0\\
t\in \R
\end{subarray}
}{}_3F_2{\Big(}
\left.
\begin{array}{c}
a,1-b,a+p\\
1+a,a+p+q
\end{array}
\right.
;t{\Big)}.
\end{align*}
\end{proof}

From this proposition,
we have
\begin{lem}\label{iterated integral computation of Fermat}
$$
x_{r,s,l,m}=
\frac{\displaystyle N^2\int_{\gamma}f_{r,s}f_{l,m}}{B^N_{r,s}B^{N}_{l,m}}=
\frac{NB^N_{r+l,m}}{rB^N_{r,s}B^{N}_{l,m}}
\lim_{
\begin{subarray}{c}
t\to 1-0\\
t\in \R
\end{subarray}
}{}_3F_2{\Big(}
\left.
\begin{array}{c}
r/{N},1-s/{N},(r+l)/{N}\\
1+r/{N},(r+l+m)/{N}
\end{array}
\right.
;t{\Big)}.$$
\end{lem}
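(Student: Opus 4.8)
The plan is to reduce the iterated integral $\int_{\gamma}f_{r,s}f_{l,m}$, by the monomial substitution $u=t_1^{N}$ and $v=t_2^{N}$, to the integral over the $1$-simplex $\Delta$ that is evaluated in Proposition \ref{hypergeometric function}. First I would unwind the definitions: from $\gamma_0^{\ast}\omega_{r,s}^{\prime}=t^{r-1}\bigl(\sqrt[N]{1-t^{N}}\bigr)^{s-N}dt$ one reads off $f_{r,s}=t^{r-1}(1-t^{N})^{s/N-1}dt$, and likewise $f_{l,m}=t^{l-1}(1-t^{N})^{m/N-1}dt$. By the definition of Chen's iterated integral this gives
\[
\int_{\gamma}f_{r,s}f_{l,m}=\int_{0\le t_1\le t_2\le 1}t_1^{r-1}(1-t_1^{N})^{s/N-1}\,t_2^{l-1}(1-t_2^{N})^{m/N-1}\,dt_1dt_2 .
\]

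Next I would substitute $u=t_1^{N}$ and $v=t_2^{N}$, so that $dt_1=\tfrac{1}{N}u^{1/N-1}du$ and $dt_2=\tfrac{1}{N}v^{1/N-1}dv$; since $t\mapsto t^{N}$ is increasing on $[0,1]$, the region $0\le t_1\le t_2\le 1$ is carried onto $\Delta$, and a short computation yields
\[
\int_{\gamma}f_{r,s}f_{l,m}=\frac{1}{N^{2}}\int_{\Delta}u^{r/N-1}(1-u)^{s/N-1}\,v^{l/N-1}(1-v)^{m/N-1}\,dudv .
\]
Now I would apply Proposition \ref{hypergeometric function} with $a=r/N$, $b=s/N$, $p=l/N$, $q=m/N$; the hypotheses $a,b,p,q>0$ and $b<1$ hold because $r,s,l,m\ge 1$ and $r+s\le N-1$, $l+m\le N-1$. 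This rewrites the last integral as $\dfrac{B\bigl((r+l)/N,\,m/N\bigr)}{r/N}=\dfrac{N\,B^{N}_{r+l,m}}{r}$ times the limit of the ${}_3F_2$ appearing in the statement (upper parameters $r/N$, $1-s/N$, $(r+l)/N$; lower parameters $1+r/N$, $(r+l+m)/N$), hence $\int_{\gamma}f_{r,s}f_{l,m}=\dfrac{B^{N}_{r+l,m}}{Nr}$ times that limit. Substituting this into $x_{r,s,l,m}=N^{2}\int_{\gamma}f_{r,s}f_{l,m}/(B^{N}_{r,s}B^{N}_{l,m})$ yields precisely the asserted formula.

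The only point requiring a little care is the legitimacy of the substitution near the corners where $u$ or $v$ lies in $\{0,1\}$, where the integrand has an integrable singularity; there I would note that the original double integral converges absolutely, since the exponents $r-1$, $s/N-1$, $l-1$, $m/N-1$ all exceed $-1$, so Fubini applies and the change of variables is valid on the interior with the boundary contributing nothing. Beyond this routine bookkeeping there is no real obstacle: all of the analytic substance — in particular the identification of the integral with the limit of ${}_3F_2$ as $t\to 1-0$ — is already packaged into Proposition \ref{hypergeometric function}.
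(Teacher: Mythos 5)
Your proof is correct and is exactly the argument the paper intends: the paper states the lemma with only the words ``From this proposition, we have,'' leaving implicit precisely the substitution $u=t_1^N$, $v=t_2^N$ and the application of Proposition \ref{hypergeometric function} with $a=r/N$, $b=s/N$, $p=l/N$, $q=m/N$ that you carry out. All parameter identifications and the resulting constant $NB^N_{r+l,m}/r$ check out.
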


\section{A nontrivial algebraic cycle in $J(F(6))$}
\label{A nontrivial algebraic cycle in $J(F(6))$}
In this section, we consider only the case $N=6$.
We compute some value of the harmonic volume for the Fermat sextic $F(6)$.
This tells the nontriviality of the algebraic cycle $F(6)-F(6)^-$
in $J(F(6))$.
We have the genus of $F(6)$ is equal to $10$
and
$\{\omega_{r,s}\}_{r,s\geq1,r+s\leq 5}$
is a basis of $H^{1,0}$ of $F(6)$.
For the rest of this paper, we denote $\zeta=\zeta_6$ and $R=\Z[\zeta]$.
Proposition \ref{intersectin matrix}
gives that a set of loops
$\{\gamma_{0,0},\gamma_{0,1},\ldots,\gamma_{0,4},
\gamma_{1,0},\gamma_{1,1},\ldots,\gamma_{1,4},
\gamma_{2,0}
\ldots,
\gamma_{3,0},\gamma_{3,1},\ldots,\gamma_{3,4}\}$
may be considered as a basis of
the integral homology group $H_1(F(6);\Z)$ of $F(6)$.
Let $\mathrm{P.D.}\colon H^1(F(6);\C)\to H_1(F(6);\C)$
be the Poincar\'e dual.
\begin{lem}\label{PD}
Let $L_{i,k}$ be a linear combination
$\sum_{n=0}^{5}\zeta^{nk}\gamma_{i,n}$ in $H_1(F(6);\C)$.
Then we have
$$
\mathrm{P.D.}(\omega_{1,1})=
\frac{1}{122}\big\{
(60-13\zeta)L_{0,1}
-(15-49\zeta)L_{1,1}
-(43-51\zeta)L_{2,1}
-(50-21\zeta)L_{3,1}
\big\}.
$$
\end{lem}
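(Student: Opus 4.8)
The plan is to pin down $\xi:=\mathrm{P.D.}(\omega_{1,1})$ by the linear equations it must satisfy against the basis $\{\gamma_{i,j}\}_{0\le i\le 3,\,0\le j\le 4}$ of $H_1(F(6);\Z)$ given by Proposition \ref{intersectin matrix}, and then to solve them. Writing $(\,\cdot\,,\,\cdot\,)$ for the intersection pairing, $\xi$ is the unique class with $(\xi,c)=\int_{c}\omega_{1,1}$ for every $c\in H_1(F(6);\Z)$. The right-hand sides are immediate: a single period of a closed form depends only on the homology class, and $[\gamma_{i,j}]=[\alpha^i\beta^j\kappa_0]$ because $\gamma_{i,j}=\gamma_j\cdot(\alpha^i\beta^j\kappa_0)\cdot\gamma_j^{-1}$ is conjugate to $\alpha^i\beta^j\kappa_0$; hence Proposition \ref{Rohrlich} together with the normalisation $\omega_{r,s}=N\omega'_{r,s}/B^{N}_{r,s}$ gives $\int_{\gamma_{i,j}}\omega_{1,1}=(1-\zeta)^{2}\zeta^{\,i+j}$.

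I would then reduce the size of the system using the automorphism $\beta$. Since $\omega'_{1,1}=dx/y^{N-1}$ we have $\beta^{*}\omega_{1,1}=\zeta\,\omega_{1,1}$, and as $\beta$ is orientation-preserving the Poincar\'e dual twists by the inverse of the eigenvalue, so $\beta_{*}\xi=\zeta^{-1}\xi$, i.e. $\xi\in\ker(\beta_{*}-\zeta^{-1})$. The relation $\sum_{n=0}^{5}[\alpha^{i}\beta^{n}\kappa_{0}]=0$ — valid already at the chain level because $\kappa_{0}=(1-\alpha)(1-\beta)\gamma_{0}$ and $(1-\beta)\sum_{n=0}^{5}\beta^{n}=0$ — shows that $\beta_{*}$ preserves each ``row'' subspace $W_{i}=\mathrm{span}_{\C}\{\gamma_{i,0},\dots,\gamma_{i,4}\}$ and acts there as the companion matrix of $1+x+\cdots+x^{5}$, whose eigenvalue $\zeta^{-1}$ is simple with eigenvector $L_{i,1}$. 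Hence $\ker(\beta_{*}-\zeta^{-1})=\mathrm{span}_{\C}\{L_{0,1},L_{1,1},L_{2,1},L_{3,1}\}$ and $\xi=\sum_{i=0}^{3}c_{i}L_{i,1}$ for scalars $c_{i}$.

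To determine the $c_{i}$ I would pair $\xi$ with $\gamma_{0,0},\gamma_{1,0},\gamma_{2,0},\gamma_{3,0}$. By $[\gamma_{i,j}]=[\alpha^i\beta^j\kappa_0]$ and the invariance of the intersection pairing under the group generated by $\alpha$ and $\beta$, each $(L_{i,1},\gamma_{i',0})$ is an explicit $\Z[\zeta]$-combination of the intersection numbers $(\kappa_{0},\alpha^{a}\beta^{b}\kappa_{0})$, which are supplied by Kamata's intersection matrix $K$ (case (i) in \cite{K}), equivalently by Lemma \ref{intersection} after translating by powers of $\alpha$ and $\beta$. Imposing $\sum_{i}c_{i}(L_{i,1},\gamma_{i',0})=(1-\zeta)^{2}\zeta^{i'}$ for $i'=0,1,2,3$ gives a $4\times4$ linear system over $\Z[\zeta]$, and its solution over $\Q(\zeta)$ is the vector of coefficients displayed in the statement, whose common denominator is $122$. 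Since $\{L_{0,1},\dots,L_{3,1}\}$ already exhausts $\ker(\beta_{*}-\zeta^{-1})$, these four equations lose no information, so they prove the Lemma.

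The step I expect to be the real obstacle is the careful handling of the intersection data, not the algebra. Lemma \ref{intersection} records only four of the numbers $(\kappa_{0},\alpha^{a}\beta^{b}\kappa_{0})$, and one must be sure the remaining ones vanish: this rests on the fact that the translates $\alpha^{a}\beta^{b}\gamma_{0}$ project under $\pi$ to segments emanating from $0$ in $\C$, so that two distinct translates meet, if at all, only in a single common endpoint among the $Q_{i}$ and $P_{i}$, and the cycle intersections reduce to a finite local computation at those points. One must also verify that the orientation and indexing conventions of \cite{K} agree with those underlying Proposition \ref{Rohrlich}; a sign or indexing slip there would corrupt the entire linear system. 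Once the $4\times4$ matrix is assembled correctly, the remainder is routine arithmetic in $\Z[\zeta]$.
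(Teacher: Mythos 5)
Your proposal is correct and follows essentially the same route as the paper: both use the equivariance $\beta_*\mathrm{P.D.}(\omega_{1,1})=\zeta^{-1}\mathrm{P.D.}(\omega_{1,1})$ to place the dual in the span of $L_{0,1},\dots,L_{3,1}$, and then determine the four coefficients by pairing against $\gamma_{0,0},\dots,\gamma_{3,0}$, with the periods coming from Proposition \ref{Rohrlich} and the intersection numbers from Kamata's matrix. The extra detail you supply (the companion-matrix description of the $\zeta^{-1}$-eigenspace and the caveat about verifying that the remaining intersection numbers $(\kappa_0,\alpha^a\beta^b\kappa_0)$ vanish) merely makes explicit what the paper leaves implicit.
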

\begin{proof}
Since $\beta_{\ast}(\gamma_{i,j})=\gamma_{i,j+1}$
as a homology class,
we obtain 
$$\beta_{\ast}L_{i,k}=\zeta^{-k}L_{i,k}.$$
We have
$$
\beta_{\ast}(\mathrm{P.D.}(\omega_{1,1}))
=\mathrm{P.D.}((\beta^{-1})^{\ast}\omega_{1,1})
=\zeta^5\mathrm{P.D.}(\omega_{1,1}).
$$
Since $\beta_{\ast}L_{i,1}=\zeta^5L_{i,1}$,
there exist constants $\lambda_0,\ldots,\lambda_3\in \C$ such that
$\mathrm{P.D.}(\omega_{1,1})=\sum_{i=0}^{3}\lambda_iL_{i,1}$.
The result follows from Proposition \ref{Rohrlich}
and the equations
\begin{align*}
\int_{\gamma_{0,0}}\omega_{1,1}&=(\mathrm{P.D.}(\omega_{1,1}),\gamma_{0,0})
&=\sum_{i=0}^{3}\lambda_i(L_{i,1},\gamma_{0,0})
&=(\zeta^5-\zeta)\lambda_0+(\zeta-1)\lambda_1,\\
\int_{\gamma_{1,0}}\omega_{1,1}&=(\mathrm{P.D.}(\omega_{1,1}),\gamma_{1,0})
&=\sum_{i=0}^{3}\lambda_i(L_{i,1},\gamma_{1,0})
&=(1-\zeta)\lambda_0+(\zeta^5-\zeta)\lambda_1+(\zeta-1)\lambda_2,\\
\int_{\gamma_{2,0}}\omega_{1,1}&=(\mathrm{P.D.}(\omega_{1,1}),\gamma_{2,0})
&=\sum_{i=0}^{3}\lambda_i(L_{i,1},\gamma_{2,0})
&=(1-\zeta)\lambda_1+(\zeta^5-\zeta)\lambda_2+(\zeta-1)\lambda_3,\\
\int_{\gamma_{3,0}}\omega_{1,1}&=(\mathrm{P.D.}(\omega_{1,1}),\gamma_{3,0})
&=\sum_{i=0}^{3}\lambda_i(L_{i,1},\gamma_{3,0})
&=(1-\zeta)\lambda_2+(\zeta^5-\zeta)\lambda_3.
\end{align*}
\end{proof}

Let
$\displaystyle \int_{L_{i,k}}\omega_{r,s}\omega_{l,m}$
denote
$\displaystyle \sum_{n=0}^{5}\zeta^{nk}
\int_{\gamma_{i,n}}\omega_{r,s}\omega_{l,m}$.
\begin{lem}\label{an iterated integral}
For $i=0,\ldots,3$, we have
$$
\int_{L_{i,1}}\omega_{1,2}\omega_{1,3}=
6\left\{\zeta^{2i}(1+\zeta)(x_{1,2,1,3}-1)-\zeta^{i}\right\}.
$$
\end{lem}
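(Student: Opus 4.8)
The plan is to apply Theorem~\ref{iterated integral of Fermat2} with $N=6$ to each individual loop $\gamma_{i,n}$ and then sum against the weights $\zeta^{n}$ that define $L_{i,1}$. So first I would specialize the formula of Theorem~\ref{iterated integral of Fermat2} to the index set $(r,s,l,m)=(1,2,1,3)$, so that $r+l=2$, $s+m=5$, and write
$$
\int_{\gamma_{i,n}}\omega_{1,2}\omega_{1,3}
=\zeta^{2i+5n}A+\zeta^{i+3n}B-\zeta^{i+2n}C,
$$
where $A=(1-\zeta^{2})(1-\zeta^{5})x_{1,2,1,3}+(1-\zeta^{2})(\zeta^{2}+\zeta^{4}-\zeta^{3}-\zeta)$ comes from the first line, and $B$, $C$ are the coefficients $(1-\zeta^{n\cdot 2})(1-\zeta)(1-\zeta^{3})$ and $(1-\zeta^{n\cdot 3})(1-\zeta)(1-\zeta^{2})$ coming from the two correction terms on the second line of the theorem — note these depend on $n$ through $\zeta^{2n}$ and $\zeta^{3n}$ respectively. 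I would keep careful track that in Theorem~\ref{iterated integral of Fermat2} the exponents in the $j$-dependent correction terms are $\zeta^{js}$ and $\zeta^{jm}$, i.e.\ here $\zeta^{2n}$ and $\zeta^{3n}$, which is what makes the summation over $n$ collapse nicely.

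Next I would multiply by $\zeta^{n}$ and sum $n$ from $0$ to $5$, using repeatedly the elementary fact that $\sum_{n=0}^{5}\zeta^{nk}$ equals $6$ when $k\equiv 0\pmod 6$ and $0$ otherwise. In the first term the total exponent of $\zeta$ attached to $n$ is $5n+n=6n\equiv 0$, so that term contributes $6\,\zeta^{2i}A$. In the $B$-term I would expand $(1-\zeta^{2n})$ and pick up $\zeta^{3n}\cdot\zeta^{n}=\zeta^{4n}$ and $\zeta^{2n}\zeta^{3n}\zeta^{n}=\zeta^{6n}$; only the latter survives summation, giving $-6\,\zeta^{i}(1-\zeta)(1-\zeta^{3})\cdot(\text{sign})$ — here I must be careful with the signs, since $B$ enters with a $+$ and the $(1-\zeta^{2n})$ splits as $1-\zeta^{2n}$. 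Similarly in the $C$-term the relevant surviving exponent is $\zeta^{2n}\zeta^{3n}\zeta^{n}=\zeta^{6n}$, contributing a $\pm 6\,\zeta^{i}(1-\zeta)(1-\zeta^{2})$. Then I would reduce the resulting cyclotomic constants using $\zeta^{3}=-1$, $\zeta^{2}=\zeta-1$ (since $\zeta^{2}-\zeta+1=0$), so that, e.g., $1-\zeta^{3}=2$, $1-\zeta^{2}=2-\zeta$, $(1-\zeta^{2})(1-\zeta^{5})=(2-\zeta)(1-\zeta^{5})$, etc., and collect everything into the claimed closed form $6\{\zeta^{2i}(1+\zeta)(x_{1,2,1,3}-1)-\zeta^{i}\}$. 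In particular I expect the constant term of $A$, namely $(1-\zeta^{2})(\zeta^{2}+\zeta^{4}-\zeta^{3}-\zeta)$, to combine with the surviving pieces of the $B$- and $C$-terms to produce exactly $-(1+\zeta)\cdot 1$ inside the braces after factoring out the overall $(1-\zeta^{2})(1-\zeta^{5})=1+\zeta$ — this is the identity $(1-\zeta^2)(1-\zeta^5)=1+\zeta$ that one checks from $\zeta^2-\zeta+1=0$.

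The main obstacle is purely bookkeeping: tracking the three exponent progressions ($\zeta^{5n},\zeta^{3n},\zeta^{2n}$ modified by the $(1-\zeta^{2n})$ and $(1-\zeta^{3n})$ factors, then all shifted by $\zeta^{n}$) and making sure the orthogonality sums are applied to the correct monomials, together with the subsequent simplification of the cyclotomic coefficients down to the symmetric expression in $\zeta$. There is no conceptual difficulty — everything reduces to Theorem~\ref{iterated integral of Fermat2} and the character-sum identity for sixth roots of unity — but the sign of each correction term and the final collection into $(1+\zeta)(x_{1,2,1,3}-1)-1$ must be handled with care; a MATHEMATICA check of the identity $\sum_{n}\zeta^{n}\int_{\gamma_{i,n}}\omega_{1,2}\omega_{1,3}=6\{\zeta^{2i}(1+\zeta)(x_{1,2,1,3}-1)-\zeta^{i}\}$ for $i=0,1,2,3$ would confirm it.
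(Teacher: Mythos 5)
Your proposal is correct and follows essentially the same route as the paper: specialize Theorem~\ref{iterated integral of Fermat2} to $(r,s,l,m)=(1,2,1,3)$, $j=n$, then sum against $\zeta^{n}$ using $\sum_{n=0}^{5}\zeta^{nk}=6$ or $0$; indeed $(1-\zeta^{2})(1-\zeta^{5})=1+\zeta$, the constant part of your $A$ reduces to $-(1+\zeta)$, and the surviving $\zeta^{6n}$ pieces of the $B$- and $C$-terms give $\zeta^{i}\bigl(-12(1-\zeta)+6(1-2\zeta)\bigr)=-6\zeta^{i}$, yielding the stated formula. (Only a slight bookkeeping nuance: the $-(1+\zeta)$ comes entirely from $A$'s constant term, while the $B$- and $C$-survivors separately produce the $-\zeta^{i}$; they do not mix.)
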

\begin{proof}
By Theorem \ref{iterated integral of Fermat2},
it is easy to compute
$$\int_{\gamma_{i,n}}\omega_{1,2}\omega_{1,3}=
\zeta^{2i-1}(1+\zeta)(x_{1,2,1,3}-1)
+2(1-\zeta^{2n})\zeta^{i+3n}(1-\zeta)
-(1-\zeta^{3n})\zeta^{i+2n}(1-2\zeta).
$$
Using this equation,
we obtain the result in a straightforward way.
\end{proof}

\begin{thm}\label{Fermat}
Let $F(6)$ be the Fermat sextic.
Then, the cycle $F(6)-F(6)^-$ is not algebraically
equivalent to zero in $J(F(6))$.
\end{thm}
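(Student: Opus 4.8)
The plan is to apply Theorem~\ref{a sufficient condtion such that cycles are nontrivial} in contrapositive form: it suffices to exhibit a single $\omega \in (H^{1,0}_R)^{\otimes_R 3}$ for which $2 I_R(\omega) \not\equiv 0 \pmod{R}$, where $R = \Z[\zeta]$. The natural candidate, guided by Lemmas~\ref{PD} and~\ref{an iterated integral}, is to take $\omega = \omega_{1,1} \otimes \omega_{1,2} \otimes \omega_{1,3}$. The reason this triple is forced is arithmetic: the Poincar\'e dual of $\omega_{1,1}$ lies in the eigenspace where $\beta_\ast$ acts by $\zeta^5$ (so it is a combination of the $L_{i,1}$), and since $\omega_{1,1}$ pairs with the homology classes $\gamma_{i,n}$ with the same $\beta$-weight as $\omega_{1,2}\otimes\omega_{1,3}$, the contributions from the other $\gamma_{i,n}$ in the harmonic volume sum collapse neatly onto the $L_{i,1}$ combinations computed in Lemma~\ref{an iterated integral}.

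First I would write $\mathrm{P.D.}(\omega_{1,1}) = \frac{1}{122}\sum_{i=0}^{3} c_i L_{i,1}$ with $c_0 = 60 - 13\zeta$, $c_1 = -(15-49\zeta)$, $c_2 = -(43-51\zeta)$, $c_3 = -(50-21\zeta)$ from Lemma~\ref{PD}. Then by the definition of $I_R$ and the definition of $\int_{L_{i,1}}\omega_{1,2}\omega_{1,3}$,
\begin{align*}
I_R(\omega_{1,1}\otimes\omega_{1,2}\otimes\omega_{1,3})
&\equiv \frac{1}{122}\sum_{i=0}^{3} c_i \int_{L_{i,1}}\omega_{1,2}\omega_{1,3} \pmod{R}\\
&\equiv \frac{1}{122}\sum_{i=0}^{3} c_i \cdot 6\left\{\zeta^{2i}(1+\zeta)(x_{1,2,1,3}-1) - \zeta^{i}\right\} \pmod{R},
\end{align*}
using Lemma~\ref{an iterated integral}. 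The term $\sum_i c_i \zeta^i$ and the term $\sum_i c_i \zeta^{2i}$ are explicit elements of $R$; the only transcendental-looking quantity is $x_{1,2,1,3}$, which by Lemma~\ref{iterated integral computation of Fermat} equals $\frac{6\,B^6_{2,3}}{B^6_{1,2}B^6_{1,3}}\lim_{t\to 1-0}{}_3F_2\!\left(\begin{array}{c} 1/6, 2/3, 1/3 \\ 7/6, 1 \end{array}; t\right)$.

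Next I would reduce $2 I_R(\omega_{1,1}\otimes\omega_{1,2}\otimes\omega_{1,3})$ modulo $R$ to an expression of the form $\lambda(x_{1,2,1,3} - 1) + (\text{element of }R)$ for an explicit $\lambda \in \Q(\zeta)$ coming from $\frac{12}{122}(1+\zeta)\sum_i c_i \zeta^{2i}$; I would then evaluate $x_{1,2,1,3}$ numerically via the \texttt{MATHEMATICA} program, compute the resulting complex number $2 I_R(\omega_{1,1}\otimes\omega_{1,2}\otimes\omega_{1,3}) \bmod R$ to sufficient precision, and verify it is bounded away from every lattice point of $R = \Z[\zeta] = \Z \oplus \Z\zeta$ (equivalently, that its representative in the fundamental domain is not within the numerical error tolerance of $0$). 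The hexagonal lattice $\Z[\zeta]$ is discrete, so a sufficiently accurate numerical value rigorously certifies $2I_R \not\equiv 0 \pmod R$, and then Theorem~\ref{a sufficient condtion such that cycles are nontrivial} yields the claim.

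The main obstacle is the last step: one must control the numerical error in the limiting value of ${}_3F_2$ at $t \to 1-0$ well enough that the computed value of $2 I_R(\omega) \bmod R$ is provably nonzero --- i.e., the computed point must be farther from $0$ (and from all nearby lattice translates) than the certified error bound. Since $t=1$ is on the boundary of the disk of convergence, naive truncation of the hypergeometric series converges slowly, so I would either use a known closed-form or transformation formula for this particular ${}_3F_2$ value (the parameters $1/6, 2/3, 1/3; 7/6, 1$ are special), or use an accelerated summation / analytic continuation, to get enough digits. Everything else is bookkeeping with cyclotomic integers and the already-proven Lemmas~\ref{PD}, \ref{an iterated integral}, and~\ref{iterated integral computation of Fermat}.
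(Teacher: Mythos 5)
Your proposal follows the paper's own proof essentially verbatim: the paper likewise applies Theorem~\ref{a sufficient condtion such that cycles are nontrivial} to the triple $\omega_{1,2}\otimes\omega_{1,3}\otimes\omega_{1,1}$, combines Lemmas~\ref{PD} and~\ref{an iterated integral} to reduce $2I_R$ modulo $R$ to $\frac{6}{61}\bigl\{(42-3\zeta)x_{1,2,1,3}-95+46\zeta\bigr\}$, and then certifies nontriviality numerically via Lemma~\ref{iterated integral computation of Fermat} and the MATHEMATICA computation of the ${}_3F_2$ value (using that $2\Re(\alpha)\notin\Z$ forces $\alpha\notin\Z[\zeta]$). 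The only slip is the second lower parameter of your ${}_3F_2$: Lemma~\ref{iterated integral computation of Fermat} gives $(r+l+m)/N=5/6$ for $(r,s,l,m)=(1,2,1,3)$, not $1$.
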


\begin{proof}
By the definition of the harmonic volume $I_{R}$,
we have
$$
I_{R}(\omega_{1,2}\otimes \omega_{1,3}\otimes\omega_{1,1})
\equiv
\sum_{i=0}^{3}\lambda_i\int_{L_{i,1}}\omega_{1,2}\omega_{1,3}
\ \mathrm{mod}\ R.
$$
Using Lemma \ref{PD} and \ref{an iterated integral},
we obtain
$$2I_{R}(\omega_{1,2}\otimes \omega_{1,3}\otimes\omega_{1,1})
\equiv
\frac{6}{61}
\big\{
(42-3\zeta)x_{1,2,1,3}
-95+46\zeta
\big\}
\ \mathrm{mod}\ R,$$
and denote it by $\alpha$.
By Lemma \ref{iterated integral computation of Fermat}
and the numerical calculation (Figure \ref{Fermat-program}
in Appendix),
we obtain the value
$$2\Re (\alpha)
\equiv
\frac{6}{61}
(
81x_{1,2,1,3}-144
)
\equiv
0.74286\pm 1\times 10^{-5}
\ \mathrm{mod}\ \Z.$$
The result follows from
Theorem \ref{a sufficient condtion such that cycles are nontrivial}
and the lemma
$$2\Re(\alpha)\not\in\Z\Rightarrow \alpha\not\in \Z[\zeta].$$
\end{proof}

\section{Appendix}
We introduce the MATHEMATICA program \cite{W} in
the proof of Theorem \ref{Fermat}.
\begin{figure}[h]
\begin{center}
\psbox[width=15cm]{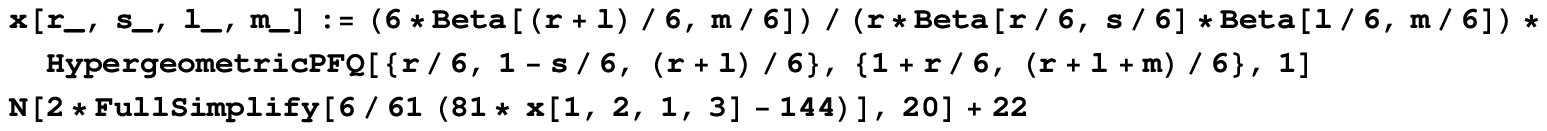}
\caption{A numerical calculation program 
in the proof of Theorem \ref{Fermat}}
\label{Fermat-program}
\end{center}
\end{figure}


\begin{thebibliography}{99}


\bibitem{Ce}
Ceresa, G.:
{\it $C$ is not algebraically equivalent to $C\sp{-}$ in its Jacobian.}
Ann. of Math. (2) {\bf 117} (1983), no. 2, 285--291.

\bibitem{C}
Chen, Kuo Tsai:
{\it Algebras of iterated path integrals and fundamental groups.}
 Trans. Amer. Math. Soc. {\bf 156} (1971) 359--379.



\bibitem{Gr}
Gross, Benedict H.:
{\it On the periods of abelian integrals and a formula of Chowla and Selberg.
With an appendix by David E. Rohrlich.}
Invent. Math. {\bf 45} (1978), no. 2, 193--211.

\bibitem{G}
Gunning, R. C.:
{\it Quadratic periods of hyperelliptic abelian integrals.}
 Problems in analysis (Papers dedicated to Salomon Bochner, 1969), pp. 239--247. Princeton Univ. Press, Princeton, N.J., 1970.


\bibitem{H-1}
Harris, Bruno:
{\it Harmonic volumes.}
 Acta Math. {\bf 150} (1983), no. 1-2, 91--123.



\bibitem{H-3}
Harris, Bruno:
{\it Homological versus algebraic equivalence in a Jacobian.}
Proc. Nat. Acad. Sci. U.S.A. {\bf 80} (1983), no. 4 i., 1157--1158.

\bibitem{H-4}
Harris, Bruno:
{\it Iterated integrals and cycles on algebraic manifolds.}
Nankai Tracts in Mathematics, {\bf 7}. World Scientific Publishing Co., Inc., River Edge, NJ, 2004. 

\bibitem{K}
Kamata, Yasuo:
{\it The algorithm to calculate the period matrix of the curve $x\sp m+y\sp n=1$.}
 Tsukuba J. Math.  {\bf 26}  (2002),  no. 1, 15--37.






\bibitem{T-2}
Tadokoro, Yuuki:
{\it A nontrivial algebraic cycle in
the Jacobian variety of the Klein quartic},
to appear in Mathematische Zeitschrift.

\bibitem{T-T}
Tretkoff, C. L.; Tretkoff, M. D.:
{\it Combinatorial group theory, Riemann surfaces and differential equations.}
 Contributions to group theory, 467--519, Contemp. Math., {\bf 33}, Amer. Math. Soc., Providence, RI, 1984.


\bibitem{W}
Wolfram, Stephen:
{\it The Mathematica book, Fourth edition.}
Wolfram Media/Cambridge University Press, 1999. 
 

\end{thebibliography}
\end{document}